\newtheorem{theorem}{Theorem}[section]
\newtheorem{remark}{Remark}
\newcommand{\ds}{\displaystyle}
\newcommand{\ol}{\overline}
\newcommand{\ra}{\rightarrow}
\newcommand{\Card}{\mathop{\textsf{Card}}\nolimits}
\newcommand{\pol}{\mathop{\textsf{pol}}\nolimits}
\newcommand{\Tr}{\mathcal{T}\!\mathit{r}}
\title{Coupon collecting and transversals of hypergraphs}
\author{Marcel Wild}
\address{Marcel Wild\\
University of Stellenbosch\\
South Africa
}
\email{mwild@sun.ac.za}
\author{Svante Janson}
\address{Svante Janson\\
University of Uppsala\\
Sweden
}
\email{svante@math.uu.se}
\author{Stephan Wagner}
\address{Stephan Wagner\\
University of Stellenbosch\\
South Africa
}
\email{swagner@sun.ac.za}
\author{Dirk Laurie}
\address{Dirk Laurie\\
University of Stellenbosch\\
South Africa
}
\email{dpl@sun.ac.za}
\keywords{coupon collector, transversal}
\begin{document}

\maketitle

\begin{abstract}
The classic Coupon-Collector Problem (CCP) is generalized. Only basic probability theory is used. Centerpiece rather is an algorithm that efficiently counts all $k$-element transversals of a set system.
\end{abstract}

\section{Introduction} \label{sec:intro}

In the popular game of Roulette a small metal  bullet is spun and stopped at random on one of the $w=37$ numbers $0,1,2, \dots, 36$. Apart from $0$ each one of these numbers has several properties.
For example 13 is at the same time odd, black, in the 2nd 12,  and $1c$ (in the first column); see Figure~\ref{fig:roulette}. We will show how to compute the  expected
time to encounter, in successive draws at random, all these properties: even, odd, red, black, 1--18, 19--36, 1st 12, 2nd 12, 3rd 12, 1c, 2c, 3c. 

Our setting is as follows. Let $W$ be a set whose $w$ many elements $c$ will be viewed as ``coupons''. Let ${\mathcal G} = \{G_1, \cdots, G_h\}$ be any family of nonempty (not necessarily distinct) subsets. Thus $\bigcup {\mathcal G} \subseteq W$. By definition $G_i$ contains exactly the coupons $c$ of the $i$-th \emph{goal} (purpose, property, etc.) Put another way, each fixed coupon $c \in W$ is \emph{multipurpose} in the sense that it can serve many goals $i, j$, and so forth, according to the sets $G_i, G_j, \cdots$ that contain $c$. If $\bigcup {\mathcal G} = W$, then every coupon has at least one goal. It is convenient to imagine the $w$ many coupons as being located in an urn.

In a \emph{length $n$ trial} a set of $n$ coupons is picked at random one by one, and all occuring goals are recorded. For any picked coupon some of its goals may have occured already and are not again taken into account. In a trial \emph{with replacement} each coupon is put back into the urn after its goals have been ticked off. Thus at each moment every fixed coupon is drawn with probability $\frac{1}{w}$. In a trial \emph{without replacement} no drawn coupons are put back. Then necessarily $n \leq w$. Again at each moment every coupon remaining in the urn has the same probability to be drawn (some number $\geq \frac{1}{w}$). A trial is \emph{successful} if all $h$ goals show up. It is handy to call a successful trial \emph{sharply successful} if the $h$ goals are only completed in the \emph{last} draw. 

The \emph{Generalized Coupon-Collector Problem} (GCCP) is to calculate the expected length $\ell$ of a sharply successful trial.  We shall use the notation $\ell = \ell_r ({\mathcal G})$ and $\ell = \ell_{nr} ({\mathcal G})$ for a GCCP with \emph{replacement}, respectively \emph{without replacement}.

Coming back to Roulette, the coupons are the numbers $0,1, \dots, 36$ and they constitute the set $W$. As seen, all coupons have several properties, except for $0$
which has none (so $\bigcup {\mathcal G} \subset W$). Using the method of \S\ref{sec:with-rep} it turns out that the expected length of a sharply successful trial in this
GCCP with replacement is $$\ds\frac{54728027202913}{7600186994400} \approx 7.201.$$

If after each ``drawing'' one prevents the spinning wheel from delivering the same number again (thus having a GCCP {\it without} replacement) the corresponding number is obviously smaller; in fact it is 
$$\ds\frac{65774035502891}{10043104242600} \approx 6.549.$$ 
Notice that we can also model our multipurpose coupons $c_i \in W$ with {\it different} drawing probabilities $p_i$ as follows (for simplicity we only focus on drawings {\it with} replacement ). If without great loss of generality all $p_i$'s are rational, say $p_i = m_i/wM$, replace each $c_i$ by $m_i$ copies $c'_i, c''_i, \cdots$ which all have exactly the same goals as $c_i$. Let $W'$ be the new $wM$-element set of coupons and let ${\mathcal G}'$ match ${\mathcal G}$ in the obvious way. Then $\ell_{nr}({\mathcal G}')$ is the expected length of a sharply succesful trial with {\it original} coupons $c_i \in W$ if they were subject to the drawing probabilities $p_i$.

There is a simple situation where  the coupons in $W$ {\it already} furnish (but do not ``have'') potentially different drawing probabilities. Namely, suppose that each $c \in W$ has exactly {\it one} of $h$ goals which we then refer to as its {\it type}, and that different coupons can have the same type. Then ${\mathcal G}^\ast = \{G_1, \cdots, G_h\}$ is a {\it partition}\footnote{We henceforth write ${\mathcal G}^\ast$, not ${\mathcal G}$, in case of a partition.} of $W$ and $p_i = |G_i|/|W|$ is the probability for drawing a type $i$ coupon. This matches the ``classic'' Coupon-Collector Problem (CCP) except that in the latter framework there is no ${\mathcal G}^\ast$ but simply an unbounded supply of coupons. Each belongs to exactly one of $h$ types, the $i$-th type being drawn with probability $p_i$. The expected length $\ell (p_1, \cdots, p_h)$ of a sharply successful trial is known to be [David-Barton, p.269]

\begin{equation}
\ell (p_1, \ldots, p_h)  =  \ds\sum_{1 \leq i \leq h} \frac{1}{p_i} - \ds\sum_{1\leq i \leq j \leq h} \ \frac{1}{p_i+p_j} + \ds\sum_{1\leq i < j < k \leq h} \frac{1}{p_i + p_j + p_k}\ -\ \cdots \pm \frac{1}{p_1+\cdots  + p_h}. \label{eq:1}
\end{equation}

In particular, if $p_1 = p_2 = \cdots = p_h = \frac{1}{h}$ (call this the \emph{homogeneous} CCP) then (\ref{eq:1}) can be shown 
[Feller 1957, Example IX.3(d)] to simplify to  

\begin{equation}
\ell \left(\frac{1}{h}, \cdots,  \frac{1}{h}\right)   =  h H (h), 
\label{eq:2}
\end{equation}

where $H(h) := 1 + \frac{1}{2} + \cdots + \frac{1}{h}$ is the harmonic number.

For instance, setting $h=6$ in (\ref{eq:2}) one finds that a die has to be thrown 14.7 times on average until all numbers have shown up. The CCP has been studied by many authors, which is evident when feeding Google Scholar with ``coupon collecting''.  

Notwithstanding our sweeping generalization of the CCP (whose formula (1) is intimidating enough) the present article does not feature subtle probability arguments, but rather revolves around an efficient algorithm (the {\it transversal $e$-algorithm}) for counting the $k$-element transversals of a set system $(k = 1,2, \dots)$. The connection to coupon collecting, straightforward but unexploited so far, is discussed in \S\ref{sec:counting}. Surprisingly perhaps, our approach to the GCCP appeals more to the GCCP \emph{without replacement} (\S\ref{sec:without-rep}). Only afterwards in \S\ref{sec:with-rep} we tackle the GCCP with replacement. In \S\ref{sec:algorithm} details on the origin and workings of the transversal $e$-algorithm are provided. 

A numerical evaluation of our method pitted against the inclusion-exclusion approach (\ref{eq:1}), as well as applications to e.g.\ chess follow in \S\ref{sec:comparison}--\ref{sec:alpha}.

We shall use the notation $[h]: = \{1,2, \ldots, h\}$ for positive integers $h$.

%Applications of the CCP are plentiful, some of them outlined (with

%references) in \cite[\S 1]{BH}. For applications of urn models in general

%see \cite{JK}.  The GCCP framework seems to be new. We mention %that some

%aspect of it (\S\ref{subsec:alpha}) fits the FDP hat \cite[sec. 9.3]{FDP}

%in a clumsy way, yet the arising system of ordinary differential equations

%is usually not integrable. 

\begin{figure}
\begin{center}
\includegraphics{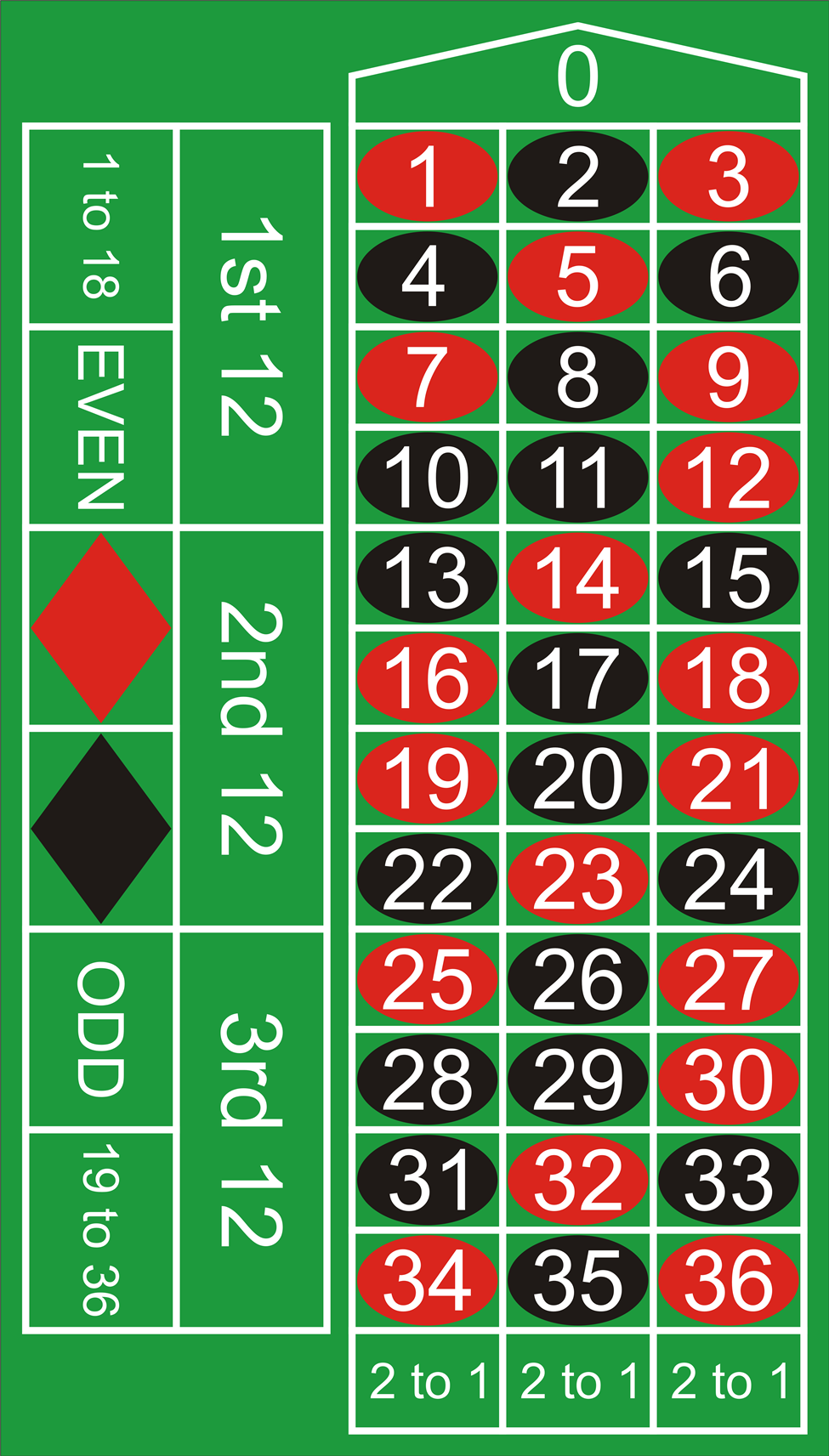}
\caption{Roulette seen as a multi-goal coupon collector's problem.}\label{fig:roulette}
\end{center}
\end{figure}

\section{It's all about counting transversals} \label{sec:counting}

In this and the next section all trials are silently assumed to be \emph{without} replacement. Mathematically our approach to the GCCP is straightforward; the challenge is its algorithmic realization in the next section. As to the mathematics, for fixed $k \in \{0,1,\cdots, w\}$ let $q_k$ be the probability that a length $k$ trial is successful. In particular $q_0 =0$ and $q_w=1$.

Recall that $G_i \subseteq W$ is the set of coupons of the $i$-th goal $(i\in [h])$. The {\it hypergraph} ($=$ set system) ${\mathcal G} = \{G_1, \dots, G_h\}$ fully determines all aspects of the GCCP. Specifically, $X \subseteq W$ is a {\it transversal} (or {\it hitting set}) of ${\mathcal G}$ if $X \cap G_i \neq \emptyset$ for all $i \in [h]$. Such a set $X$ of coupons displays each goal at least once, and so each permutation of $X$ corresponds to a successful trial. Conversely, each successful trial uses a set $X$ of coupons that is a transversal of ${\mathcal G}$.  Therefore, if
$$\tau_k \ := \textrm{number of $k$-element transversals of} \ {\mathcal G},$$
then exactly $k!\tau_k$ trials among the $w(w-1) \cdots (w-k+1)$ many length $k$ trials are successful, and so

\begin{equation}
q_k  =  \ds\frac{k!\tau_k}{w(w-1) \cdots (w-k+1)}   \left( =\frac{\tau_k}{{w\choose k}}\right) . \label{eq:qk} 
\end{equation}

Hence 
\begin{equation}
s_k  : =  q_k - q_{k-1} \qquad (k \in [w]) \label{eq:s}
\end{equation}
is the probability that a length $k$ trial is sharply successful. Therefore
\begin{equation}
\ell_{nr} ({\mathcal G})  =  \sum_{k=1}^w ks_k = \sum_{k=0}^{w-1} (1-q_k) = w - \sum_{k=1}^{w-1} q_k \label{eq:5}
\end{equation}
can be found by calculating the numbers $\tau_k$.

\section{The GCCP without replacement}% -- $\ell_{nr}({\mathcal G})$ as a fraction}

\label{sec:without-rep}

In this section all trials are still {\it without} replacement. Consider a set $W = \{c_1, \dots, c_8\}$ of eight coupons, each one of which serves between one and three goals, according to Table \ref{tab:1}.

For instance the trials $c_1, c_3, c_5$ and $c_6, c_2, c_8, c_7$ are successful. The first is sharply successful, the second is not. In order to calculate the expected length of a sharply successful trial, we  put ${\mathcal G}_1 := \{G_1, G_2, G_3, G_4\}$ and aim to count the $\tau_k$ many $k$-element transversals of ${\mathcal G}_1 \ (k \in [8])$. This is achieved by the transversal $e$-algorithm [Wild 2012a] which encodes a potentially gargantuan number of transversals of a hypergraph within comparatively few $\{0,1,2,e\}${\it-valued} rows. In the present article we are less concerned with {\it how} this method works (see \S\ref{sec:algorithm} for a few hints) but rather {\it what} it delivers. Namely, our {\it transversal hypergraph}
$${\Tr} ({\mathcal G}_1)   : =   \{X \subseteq W: \ X \ \mbox{is transversal of} \ {\mathcal G}_1 \}$$
gets presented as a union of five $\{0,1,2,e\}$-valued rows (Table \ref{Tab2}).

\begin{table}[h]
\begin{center}\begin{tabular}{c|c|c|c|c|c|c|c|c|}
& $c_1$ & $c_2$ & $c_3$ & $c_4$ & $c_5$ & $c_6$ & $c_7$ & $c_8$ \\ \hline
$G_1$ & $X$ & $X$ & $X$ & & & & & \\ \hline
$G_2$ & & &  $X$ & & & $X$ & $X$ & $X$ \\ \hline
$G_3$ & & $X$ & & $X$ & $X$ & $X$ & &  \\ \hline
$G_4$ & $X$ & & $X$ & $X$ & & $X$ & & $X$ \\ \hline
\end{tabular}\end{center}
\caption{Toy problem with 8 coupons and 4 goals}\label{tab:1}
\end{table}

\begin{table}[h]
\begin{center}\begin{tabular}{c|c|c|c|c|c|c|c|c|l}
& $c_1$ & $c_2$ & $c_3$ & $c_4$ & $c_5$ & $c_6$ & $c_7$ & $c_8$ & \\ \hline
$r_1$ & 2 & $e$ & 1 & $e$ & $e$ & $e$ & 2 & 2 & \quad $|r_1| = 120$ \\ \hline
$r_2$ & 1 & 0 & 0 & 2 & 2 & 1 & 2 & 2 & \quad $|r_2|  = 16$ \\ \hline
$r_3$ & 2 & 1 & 0 & 2 & 2 & $e$ & 2 & $e$ & \quad $|r_3| = 48$ \\ \hline
$r_4$ & $e$ & 1 & 0 & $e$ & 2 & 0 & 1 & 0 & \quad $|r_4| = 6$ \\ \hline
$r_5$ & 1 & 0 & 0 & $e$ & $e$ & 0 & $e'$ & $e'$ & \quad $|r_5| =9$ \\ \hline \end{tabular}\end{center}
\caption{${\Tr}({\mathcal G}_1)$ as disjoint union of $\{0,1,2,e\}$-valued rows}
\label{Tab2}
\end{table}

Each row encodes a set of $0,1$-bitstrings of length 8 that correspond to
subsets $X \subseteq W$ contained in ${\Tr}({\mathcal G}_1)$. Here 2 is a ``don't care''
symbol, i.e. the corresponding entry can be 0 or 1. Thus row $r_2$ encodes
$2^4$ bitstrings; one of them is $10010100$ which matches the transversal
$\{c_1, c_4, c_6\}$. A string of symbols $ee \cdots e$ (not necessarily on contiguous positions) by definition means
that any $0,1$-pattern with \emph{at least one 1} is allowed. In other
words, only $00 \cdots 0$ is forbidden. If several such $e$-patterns occur
within one row, they are mutually independent and notationally
distinguished. Thus row $r_5$ contains $(2^2 -1)(2^2-1) =9$ bitstrings, one
of them is $10011001$. 

By the workings of the $e$-algorithm the delivered $\{0,1,2,e\}$-valued rows are always mutually disjoint. For instance $r_3 \cap r_4= \emptyset$ because each $X \in r_3$ has $X \cap \{c_6, c_8\} \neq \emptyset$, and each $Y \in r_4$ has $Y \cap \{c_6, c_8\}  = \emptyset$. It follows that
$$
| {\Tr} ({\mathcal G}_1)|   =  120 + 16 + 48 + 6 + 9  =  199.
$$

For any $\{0,1,2,e\}$-valued row $r$ on a set $W$ and any $k \in [w]$ put

\begin{equation}
\Card(r,k)   : =   | \{X \in r: \ |X| = k\}|. \label{eq:6}
\end{equation}

If the set ${\Tr}({\mathcal G})$ of all transversals of some hypergraph ${\mathcal G}$ on $W$ is represented as disjoint union of $\{0,1,2,e\}$-valued rows $r_1, r_2, \dots, r_R$, then

\begin{equation}
\tau_k   =   \Card(r_1, k) + \ \Card(r_2, k) + \cdots + \ \Card(r_R, k). \label{eq:7}
\end{equation}

While there is a systematic way to get $\Card(r, k)$ (see \S\ref{sec:algorithm}), for ${\mathcal G} = {\mathcal G}_1$ we proceed ad hoc\footnote{For instance, the transversals counted by e.g.\ $\Card(r_5, 4) = 4$ are $\{c_1,c_4,c_5,c_7\}, \ \{c_1,c_4,c_5,c_8\}, \ \{c_1, c_4,c_7,c_8\}$, and $\{c_1,c_5,c_7,c_8\}$.} and get

$$
\begin{tabular}{c|c|c|c|c|c|c|c|c|c|c|c|c|c|}
$k=$& 1 & 2 & 3& 4 & 5& 6 &7 & 8 &   &  $|r_i|$ \\ \hline
$\Card(r_1, k)=$ & 0 & 4 & 18 & 34 & 35 & 21 & 7 &1 &  & 120 \\ \hline
$\Card(r_2, k)=$ & 0 &1 & 4 & 6 & 4 & 1 & 0 &0 &  & 16\\ \hline
$\Card(r_3, k)=$ & 0 &2 & 9 & 16 & 14 & 6 &1 & 0 & &  48 \\ \hline
$\Card(r_4, k)=$ & 0 &  0 & 2 & 3 & 1 & 0 & 0 & 0 &  & 6\\ \hline
$\Card(r_5,k)=$ &0 & 0 & 4 & 4 & 1 &0 & 0 & 0 &   &  9 \\ \hline 
 \hline 
$\tau_k=$ & 0 & 7 & 37 & 63 & 55 & 28 & 8 & 1 &  & \quad 199 \\
  \hline \end{tabular}
$$
\medskip

Having the $\tau_k$'s we can evaluate the probability $q_k$ of having a
successful trial of length $k$ by Formula \eqref{eq:qk} and get 
$$
\begin{array}{|c|c|c|c|c|c|c|c|} \hline
q_1 & q_2 & q_3 & q_4 & q_5 & q_6 & q_7 & q_8 \\ \hline
\rule{0pt}{12pt}  % make some extra space for the fractions
0 & \frac{1}{4} & \frac{37}{56} & \frac{9}{10} & \frac{55}{56} &  1 & 1 & 1 \\[2pt] \hline \end{array}
$$

Hence (\ref{eq:5}) gives
\begin{equation}
\ell_{nr} ({\mathcal G}_1)   =  8 - q_7 - \cdots - q_2 - q_1  =  \frac{449}{140}   \approx   3.2. \label{eq:8}
\end{equation}

\section{The GCCP with replacement}% -- $\ell_r({\mathcal G})$ as a priori infinite sum} 

\label{sec:with-rep}

Without further mention all trials in this section are \emph{with} replacement. Let $t'_n$ be the number of successful length $n$ trials, i.e. trials where all goals of coupons have occured at some point (so $t'_0 =0$). Thus
\begin{equation}
q'_n   : =  \ds\frac{t'_n}{w^n} \qquad (n \geq 0) \label{eq:9}
\end{equation}
is the probability that a length $n$ trial is successful, and
\begin{equation}
s'_n   : =   q'_n - q'_{n-1}  \qquad (n \geq 1) \label{eq:10}
\end{equation}
is the probability that a length $n$ trial is sharply successful. Hence the expected length of a sharply successful trial is
\begin{equation}
\ell_r ({\mathcal G})  =   \ds\sum_{n=1}^\infty n s'_n . \label{eq:11}
\end{equation}
As to calculating the numbers $t'_n$, observe that no matter how coupons $c_i$ are repeated in a length $n$ trial, the underlying set of (distinct)
coupons must be a $k$-element transversal $X$ of ${\mathcal G}$, for some $k \leq n$. For a fixed $k$-element set of coupons $X \subseteq W$
the number of length $n$ trials with underlying set $X$ equals the number of ways to distribute $n$ distinct balls (corresponding to the positions in the
trial) to $k$ distinct buckets (corresponding to the coupons) in such a way that no bucket stays empty. It is well known that this number is $k! S(n,k)$,
where $S(n,k)$ is the Stirling number of the second kind. Accordingly, with $\tau_k$ as in \S\ref{sec:counting}, we deduce that
\begin{equation}
t'_n = \left\{ \begin{array}{lll}
1!\,\tau_1 S(n,1) + 2!\,\tau_2 S(n,2) + \cdots + n!\,\tau_n  S(n,n), & n \leq w,\\
\\
1!\,\tau_1 S(n,1) + 2!\,\tau_2  S(n,2) + \cdots + w!\,\tau_w  S(n,w), & n > w. \end{array} \right. \label{eq:12}
\end{equation}

Fortunately the infinite sum in (\ref{eq:11}) can be evaluated as a finite sum. What's more, one can express it in terms of the probabilities $q_k$
that a length $k$-trial \emph{without} replacement is successful. Recall how $q_k$ is coupled to $\tau_k$ according to (\ref{eq:qk}). 

\begin{theorem}  \label{thm:main} 
For drawings with replacement the expected length of a sharply successful trial is
$$\ell_r({\mathcal G})   = w \sum_{k=0}^{w-1} \frac{1-q_k}{w-k}  = w \left(H(w) - \ds\sum_{k=1}^{w-1}
\frac{q_k}{w-k}\right)  = w H(w) - \ds\sum_{k=1}^{w-1}
\frac{\tau_k}{{w-1\choose k}}.$$ 
\end{theorem}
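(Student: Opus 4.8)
The plan is to start from the series expression \eqref{eq:11}, $\ell_r(\mathcal G) = \sum_{n\ge1} n s'_n$ with $s'_n = q'_n - q'_{n-1}$, and convert it to the more convenient form $\ell_r(\mathcal G) = \sum_{n\ge0}(1-q'_n)$ by Abel summation (noting $q'_n \to 1$ and $n(1-q'_n)\to 0$, which follows since $1-q'_n$ decays geometrically once $n>w$, as is visible from \eqref{eq:12} and the growth of $S(n,k)$). Then I would substitute $t'_n = \sum_{k=1}^{\min(n,w)} k!\,\tau_k\, S(n,k)$ from \eqref{eq:12} into $q'_n = t'_n/w^n$, so that
\begin{equation*}
\ell_r(\mathcal G) = \sum_{n=0}^\infty \Bigl(1 - \frac{1}{w^n}\sum_{k=1}^{w} k!\,\tau_k\, S(n,k)\Bigr),
\end{equation*}
valid for all $n$ if we read $S(n,k)=0$ for $k>n$.

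The key is to interchange the order of summation and evaluate the inner sum over $n$ for each fixed $k$. Using the exponential generating function identity $\sum_{n\ge k} S(n,k)\, x^n/n! = (e^x-1)^k/k!$, or more directly the ordinary generating function $\sum_{n\ge k} S(n,k)\, x^n = x^k/\bigl((1-x)(1-2x)\cdots(1-kx)\bigr)$, I would compute $\sum_{n\ge0} k!\,S(n,k)\,w^{-n} = k!\, w^{-k}/\prod_{j=1}^k(1-j/w) = w^k \cdot k!/\bigl(w(w-1)\cdots(w-k)\bigr)$ — wait, more carefully: $\sum_{n\ge0} S(n,k) x^n = x^k/\prod_{j=1}^k(1-jx)$, so at $x=1/w$ this is $w^{-k}/\prod_{j=1}^k(1-j/w) = w^{-k} \cdot w^k/\prod_{j=1}^k(w-j) \cdot \text{(correction)}$; evaluating the product gives $\prod_{j=1}^k (1-j/w) = (w-1)(w-2)\cdots(w-k)/w^k$, hence $\sum_{n} k!\,S(n,k)\,w^{-n} = k!\,w/\bigl((w-1)(w-2)\cdots(w-k)\bigr) = w\, k!/(w-1)^{\underline{k}}$. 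Recognizing $k!/(w-1)^{\underline k} = 1/\binom{w-1}{k}$, and combining with $\tau_k = q_k\binom{w}{k}$ from \eqref{eq:qk} and the identity $\binom{w}{k}/\binom{w-1}{k} = w/(w-k)$, the contribution of the $k$-th term telescopes into $w q_k/(w-k)$. The ``$1$'' in $1-q'_n$ must be handled by regularizing: write $1 = q_w$ (since $q_w=1$) and absorb it, or equivalently note $\sum_{n\ge0} 1 \cdot$ diverges but cancels against the $k=w$ term of the double sum; the clean route is to pair each $n$ with the observation that $\sum_{k=1}^w k!\,\tau_k S(n,k) = w^n$ for $n\ge w$ is false in general — rather one should write $1-q'_n = \sum_{k=0}^{w-1}(\text{something})$ by a direct combinatorial or algebraic identity. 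So the cleanest execution uses the finite identity $w^n - t'_n = \sum_{k=0}^{w-1}\bigl(\text{coefficient}\bigr)$; I expect the honest version is to observe $\sum_{n\ge0}(1-q'_n) = \sum_{k=0}^{w-1} w\frac{1-q_k}{w-k}$ emerges after the interchange once one writes $1 = \sum_k$-style partition-of-unity and tracks which $k$ survives.

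Once the first equality $\ell_r(\mathcal G) = w\sum_{k=0}^{w-1}\frac{1-q_k}{w-k}$ is established, the remaining two equalities are pure bookkeeping: splitting $\frac{1-q_k}{w-k} = \frac{1}{w-k} - \frac{q_k}{w-k}$ and using $\sum_{k=0}^{w-1}\frac{1}{w-k} = \sum_{j=1}^w \frac1j = H(w)$ gives the middle form; then substituting $q_k = \tau_k/\binom wk$ (with $q_0=0$ so the $k=0$ term drops) and simplifying $\frac{1}{w-k}\cdot\frac{w}{\binom wk} = \frac{1}{\binom{w-1}{k}}$ via $\binom wk = \frac{w}{w-k}\binom{w-1}{k}$ yields the last form $wH(w) - \sum_{k=1}^{w-1}\frac{\tau_k}{\binom{w-1}{k}}$.

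The main obstacle I anticipate is the interchange of summation and the precise handling of the constant term $1$ in $1-q'_n$: naively $\sum_n 1$ diverges, so one must either justify the rearrangement by grouping terms so that partial sums stay finite (e.g. summing $n$ from $0$ to $N$, applying the finite Stirling generating-function identities with explicit remainder, and letting $N\to\infty$), or reorganize from the start so that each summand is manifestly $\sum_{k=0}^{w-1}(\cdots)$. All the pieces — the generating function $\sum_n S(n,k)x^n = x^k/\prod_{j=1}^k(1-jx)$, the falling-factorial/binomial identities, and the telescoping $\binom wk/\binom{w-1}k = w/(w-k)$ — are elementary; the care needed is purely in the order of operations and the convergence argument.
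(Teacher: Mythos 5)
Your route is genuinely different from the paper's. The paper's proof never touches Stirling numbers: it couples the with-replacement process to the without-replacement one by discarding repeated coupons, observes that one stops exactly when the $k$-th \emph{new} coupon arrives with probability $s_k=q_k-q_{k-1}$, that the positions of the new coupons are independent of their values, and that the expected number of draws needed to collect $k$ distinct coupons is $e_k=\sum_{i=1}^k \frac{w}{w+1-i}$; then $\ell_r({\mathcal G})=\sum_{k=1}^w s_k e_k$ and an Abel summation gives the formula. Your analytic route through \eqref{eq:11}, \eqref{eq:12} and the generating function $\sum_n S(n,k)x^n = x^k/\prod_{j=1}^k(1-jx)$ is viable and lands in the same place; what the probabilistic argument buys is that it needs no interchange-of-summation justification and extends painlessly to higher moments (see Remark 1), whereas yours stays entirely inside the algebraic machinery of \S\ref{sec:counting}--\ref{sec:with-rep}.

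Two points need repair before your version is a proof. First, the regularization of the divergent $\sum_n 1$, which you leave at ``I expect the honest version is\dots'': the identity you need is $w^n=\sum_{k=1}^{w}\binom{w}{k}\,k!\,S(n,k)$ for $n\ge1$ (count the maps $[n]\to[w]$ by image size). This turns $1-q'_n$ into $\sum_{k=1}^{w-1}\bigl(\binom{w}{k}-\tau_k\bigr)k!\,S(n,k)/w^n$, the $k=w$ term dropping because $\tau_w=\binom{w}{w}=1$; since every coefficient $\binom{w}{k}-\tau_k$ is nonnegative, Tonelli licenses the interchange with no further ado, and the $n=0$ term supplies the leading $1$, i.e.\ the $k=0$ term of the target sum. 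Second, a factor-of-$w$ slip: $\sum_{n}k!\,S(n,k)\,w^{-n}=k!/\bigl((w-1)(w-2)\cdots(w-k)\bigr)=1/\binom{w-1}{k}$, \emph{without} the extra $w$ you insert mid-computation; your final claimed contribution $wq_k/(w-k)$ per $k$ is consistent only with this corrected constant, via $\tau_k/\binom{w-1}{k}=q_k\binom{w}{k}/\binom{w-1}{k}=wq_k/(w-k)$. With these two fixes one gets $\ell_r({\mathcal G})=1+\sum_{k=1}^{w-1}\bigl(\binom{w}{k}-\tau_k\bigr)/\binom{w-1}{k}=w\sum_{k=0}^{w-1}(1-q_k)/(w-k)$, and your bookkeeping for the remaining two equalities of the Theorem is correct as written.
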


\begin{proof}
Drawing with replacement yields an infinite sequence of coupons. If we ignore all repetitions and only keep the \emph{new} coupons, i.e., the coupons that 
have not occured earlier in the sequence, we obtain a subsequence of distinct coupons. With probability 1, every coupon occurs eventually, so the
subsequence will contain all $w$ coupons, and by symmetry they can come in any order with the same probability $1/w!$; hence, the subsequence of new
coupons is the same as drawing without replacement. 

If we draw with replacement we stop when we first have attained all goals, i.e, when the trial is sharply successful. Since repeated coupons do not help (or hinder), it is clear that we will
stop when we get a new coupon. Moreover, by the argument above, the probability that we stop when we get the $k$-th new coupon is precisely the probability $s_k$ given in \eqref{eq:s} that a trial with $k$ drawings without replacement is sharply successful. Furthermore, the positions of the new coupons are, by symmetry, stochastically independent of the sequence of values of the new coupons. Hence, provided that we stop at the $k$-th new coupon, the expected number of coupons drawn equals the expected number $e_k$ of drawings required to get $k$ distinct coupons (ignoring their values), which is known to be $e_k = \sum_{i=1}^k \frac{w}{w+1-i}$.  (See e.g.\ Feller [1957, Example IX.3(d)] for this well-known fact;
the standard argument is that when we have got $j$ distinct coupons, the probability that the next coupon is new is $(w-j)/w$, and thus the expected waiting time 
for the next new coupon is $w/(w-j)$.) Recalling that $q_0 =0$ and $q_w =1$, we deduce:
\begin{align*}
\ell_r({\mathcal G}) &= \sum_{k=1}^w s_k e_k = \sum_{k=1}^w \left( (q_k - q_{k-1}) \sum_{i=1}^k \frac{w}{w+1-i}\right) \\
&= (q_1-q_0) \frac{w}{w} + (q_2 - q_1)\left( \frac{w}{w} + \frac{w}{w-1}\right) + \cdots + (q_w-q_{w-1})\left( \ds\frac{w}{w} + \frac{w}{w-1} + \cdots + \frac{w}{1} \right)\\
&= \frac{w}{w} (q_w-q_0) + \frac{w}{w-1} (q_w-q_1) + \cdots + \frac{w}{2} (q_w-q_{w-2}) + \frac{w}{1} (q_w-q_{w-1}) \\
&= wH(w) -w \sum_{k=1}^{w-1}\frac{q_k}{w-k} = w \sum_{k=0}^{w-1} \frac{1-q_k}{w-k}.
\end{align*}
By (\ref{eq:qk}) we have $\tau_k = q_k{w \choose k}$ from which the rightmost formula in the Theorem follows.
\end{proof}

For instance, for our example Theorem \ref{thm:main} yields $\ell_r({\mathcal G}_1) = \frac{59}{15} \approx 3.9$ as opposed to $\ell_{nr}({\mathcal G}_1) \approx 3.2$ from (\ref{eq:8}). Notice that $\ell_r({\mathcal G}) = wH(w)$ if and only if all $q_k=0 \ (k < w)$, which is the classical case where each coupon has only one goal (and all these goals are distinct). The other extreme $\ell_r({\mathcal G}) = w \frac{1}{w}=1$ occurs if and only if all $q_k =1 \ (1\leq k \leq w)$, which means that every coupon fulfils every goal.

\begin{remark}
{\rm The key tool of our proof of Theorem~\ref{thm:main} is the fact that $q_k - q_{k-1}$ (the probability of a length $k$ trial without replacement being sharply successful) is also the probability that a trial with replacement is sharply successful when the $k$-th distinct coupon is drawn. This fact can also be used to compute the variance (and in principle also higher moments) of the trial length: to this end, note that the expectation of the square of the number of coupons needed to collect $k$ distinct coupons is (by the same argument as before, decomposing into $k$ independent geometrically distributed random variables)
$$\sum_{i=1}^k \frac{(i-1)w}{(w+1-i)^2} + \Bigg( \sum_{i=1}^k \frac{w}{w+1-i} \Bigg)^2.$$
Now repeating the argument of the  proof of Theorem~\ref{thm:main} yields the following expression for the variance:
$$\sum_{k=0}^{w-1} (1-q_k) \left( \frac{w(w+k)}{(w-k)^2} + \frac{2w^2}{w-k} (H(w) - H(w-k)) \right) - \ell_r({\mathcal G})^2.$$
In our standard example, this yields a variance of $\frac{836}{225} \approx 3.7$. For drawings without replacement, the situation is much simpler, and the variance is
$$\sum_{k=0}^{w-1} (2k+1)(1-q_k) - \ell_{nr}({\mathcal G})^2,$$
which equals $\frac{18339}{19600} \approx 0.9$ in our example. This ends Remark 1.}
\end{remark}

Here comes a problem which carries over from coupons to goals in a more direct fashion. Referring to Table \ref{tab:1}, the probability that goal 1 does
\emph{not} belong to a randomly drawn coupon is $a_1 = \frac{5}{8}$. Similarly define $a_2, a_3, a_4$, so $a_i=1-m_i/w$ if goal $i$
is served by $m_i$ coupons.

Coupled to a random drawing of coupons, set the random variable $X_i: = 1$ if goal $i$ comes up, and $X_i : =0$ otherwise. Hence the expected value of $X_i$ in a length $n$
trial is $E_n [X_i] = 1-a^n_i$. For drawing without replacement, the corresponding formula is
$$
E_n[X_i] = 1- \frac{\binom{w-m_i}{n}}{\binom{w}{n}}= 1- \frac{(w-m_i)!\, (w-n)!}{(w-n-m_i)!\, w!},
$$
interpreted as $1$ if $m_i+n > w.$

By the linearity of expectation one calculates that $e_n = \sum_{i=1}^4E_n[X_i]$ is the expected number of goals gathered in a
length $n$ trial. For instance, $e_4 \approx 3.7$ for drawing with replacement. 

\section{ The transversal $e$-algorithm: past and future}\label{sec:algorithm}

Independent of coupons, let ${\mathcal G}$ be any hypergraph based on $W$. One calls $X \subseteq W$ a {\it noncover} of ${\mathcal G}$ if $X \not\supseteq G$ for all $G \in{\mathcal G}$.  The ({\it noncover}) $n$-{\it algorithm} generates, compactly encoded as certain $\{0,1,2,n\}$-valued rows, all noncovers of ${\mathcal G}$. Its soundness and output-linear complexity was established in [Wild 2012a]. Note that $X$ is a noncover of ${\mathcal G}$ if and only if $W \setminus X$ is a transversal of ${\mathcal G}$. Hence one can get all transversals by running the $n$-algorithm. But for succinctness it pays to dualize everything from scratch and call the result the ({\it transversal}) $e$-{\it algorithm}. In particular, whereas $nn \dots n$ means ``at least one $0$ here'', the wildcard $ee \dots e$ means ``at least one 1 here''.

This duality was pointed out already in [Wild 2012a]. We mention that akin to the present article, also [Wild 2012b] combines the $n$-algorithm with certain probabilities, but in a completely different way. 

In the remainder of \S\ref{sec:algorithm} we generalize transversals to transversouls (to prepare for \S\ref{sec:alpha}) and will encode the latter by a suitable 
adaption of $\{0,1,2,e\}$-valued rows. The underlying transversoul $e$-algorithm will however be discussed in a future publication.  

\bigskip

Let ${\mathcal G} = \{G_1, \dots, G_h \}$ be a set system on $W = [w]$ and let $\alpha = (\alpha_1, \dots, \alpha_h)$ be a fixed vector with integer components $\alpha_i  \geq 1$. If $X \subseteq W$ is such that

\begin{equation}
(\forall 1 \leq i \leq h) \quad |X \cap G_i| \geq \alpha_i \label{eq:13}
\end{equation}
then $X$ is called, tongue in cheek,  an \emph{$\alpha$-transversoul} because it has more ``soul'' than an ordinary transversal where all $\alpha_i =1$.

As an example, take $W = [12]$ and let
$$\begin{array}{lll}
{\mathcal G}_2 & : = & \{ \{1,2,3,4,5\}, \{6,7,8\}, \{4,5,6,9,10, 11, 12\}\}, \\
\alpha & : = & (\alpha_1, \alpha_2,\alpha_3)  =  (2,1,3). \end{array}$$

Akin to \S\ref{sec:without-rep} we wish to compactly encode the family ${\Tr}({\mathcal G}_2, \alpha)$  of all $\alpha$-transversouls as a disjoint
union of multivalued rows. The key idea is the symbolism $e(s) e(s) \dots e(s)$, where $s \geq 1$ is an integer, and where the number $m$ of symbols
$e(s)$ occuring must be greater than $s$. By definition only $0,1$-bitstrings $X$ are allowed that have \emph{at least $s$ entries $1$}
on the positions occupied by the symbols $e(s)$. In particular (say) $e(1) e(1) e(1)$ amounts to the previously introduced $eee$. 

It is clear that if ${\mathcal G}_2$ was $\{G_1, G_2\}$ then the family of all $(2,1)$-transversouls $X \subseteq W$ could be written as the row $\ol{r}_0$
in Table~\ref{tab:4}. (For systematic reasons we always write $e(1) \dots e(1)$ rather than $e \dots e$.) It is more demanding to find a neat
representation of the subset ${\Tr} ({\mathcal G}_2, \alpha)$ of $\ol{r}_0$. This brings about the use of another wildcard $g(s) g(s) \dots g(s)$ (more than $s$ symbols $g(s)$) which by definition means that only bitstrings are allowed that have {\it exactly $s$ entries} 1 on the positions occupied by the symbols $g(s)$. It turns out\footnote{As said, a systematic treatment will appear elsewhere. This footnote only gives a few hints. Focus on $G_1 \cap G_3 = \{4,5\}$ and $G_2 \cap G_3 = \{6\}$. Combining the three options $00, \ g(1) g(1), \ 11$ of the former
intersection with the two options $0,1$ of the latter intersection (boldface entries in Table~\ref{tab:4}) results   in the six mutually disjoint rows $\ol{r}_1$ to $\ol{r}_6$. Consider   e.g.\ row $\ol{r}_4$. Switching the last two components of $e(2)e(2)e(2)e(2)e(2)$ in $\ol{r}_0$ to $g(1)g(1)$ in $\ol{r}_4$ forces its first three components to be $e(1) e(1) e(1)$ in $\ol{r}_4$. Similarly, switching the first component of $e(1)e(1)e(1)$ in $\ol{r}_0$ to $1$ in $\ol{r}_4$ frees its last two components to be $22$ in $\ol{r}_4$.  What's more, since $g(1)g(1)1$ in $\ol{r}_4$ means that
either $\{4,6\} \subseteq X$ or $\{5,6\} \subseteq X$ for all $X \in \ol{r}_4$, one needs to write $e'(1) e'(1) e'(1)e'(1)$ at the end of $\ol{r}_4$ in order to ensure that $|X \cap G_3| \geq 3$ for all $X \in \ol{r}_4$.} that ${\Tr}({\mathcal G}_2, \alpha)$ is the disjoint union of the $\{0,1,2,e(s), g(s) \}$-valued rows $\ol{r}_1, \ol{r}_2, \dots, \ol{r}_6$ in Table \ref{tab:4}. It is an easy matter to calculate the cardinality of a $\{0,1,2, e(s), g(s)\}$-valued row; say 
$$|\ol{r}_3|  =   (2^3-1) \cdot 2 \cdot (2^2-1) \cdot (2^4-5)  =  462.$$

\begin{table}[h]
\begin{center}\begin{tabular}{l|c|c|c|c|c|c|c|c|c|c|c|c|c|c|c|c|}
& 1 & 2 & 3 & 4 & 5 & 6 &7 & 8 & 9 & 10 & 11 & 12 \\ \hline
$\ol{r}_0=$ & $e(2)$ & $e(2)$ & $e(2)$  & $e(2)$ & $e(2)$ & $e(1)$ & $e(1)$ & $e(1)$ & 2 & 2 & 2 & 2\\ \hline
%& & & & & & & & & & & & \\ \hline
$\ol{r}_1=$ & $e(2)$ & $e(2)$ & $e(2)$ & ${\bf 0}$ & ${\bf 0}$ & ${\bf 0}$ & $e(1)$ & $e(1)$ & $e(3)$ & $e(3)$ & $e(3)$ & $e(3)$\\ \hline
$\ol{r}_2=$ & $e(2)$ & $e(2)$ & $e(2)$ & ${\bf 0}$ & ${\bf 0}$ & ${\bf 1}$ & 2 & 2& $e'(2)$ & $e'(2)$ & $e'(2)$ & $e'(2)$\\ \hline
$\ol{r}_3=$ & $e(1)$ & $e(1)$ & $e(1)$ & ${\bf g(1)}$ & ${\bf g(1)}$ & ${\bf 0}$ & $e'(1)$ & $e'(1)$ & $e(2)$ & $e(2)$ & $e(2)$ & $e(2)$ \\ \hline
$\ol{r}_4=$ & $e(1)$ & $e(1)$ & $e(1)$ & ${\bf g(1)}$ & ${\bf g(1)}$ & ${\bf 1}$ & 2 & 2 &  $e'(1)$ & $e'(1)$ & $e'(1)$ & $e'(1)$ \\ \hline
$\ol{r}_5=$ & 2 & 2 & 2  & ${\bf 1}$ & ${\bf 1}$ & ${\bf 0}$ & $e(1)$ & $e(1)$ & $e'(1)$ & $e'(1)$ & $e'(1)$ & $e'(1)$ \\ \hline
$\ol{r}_6=$ & 2 & 2 & 2 & ${\bf 1}$ & ${\bf 1}$ & ${\bf 1}$ & 2 & 2 & 2 & 2 & 2 & 2 \\ \hline
\end{tabular}\end{center}

\caption{The hypergraph ${\Tr}({\mathcal G}_2, \alpha)$ as a disjoint union of $\{0,1,2,e(s), g(s)\}$-valued rows}
\label{tab:4}
\end{table}

The numbers $\Card(r,k)$ (defined as in (\ref{eq:6})) are obtained by noting that $\Card(r,k)$ is the coefficient at $x^k$ of the polynomial $\pol(r,x)$ which is defined as the product of these factors: For each symbol 1 in $r$ take a factor $x$, for each symbol 2 a factor $1+x$, for each constraint $e(s) \dots e(s)$ of length $m$ a factor ${m \choose s} x^s + {m \choose s+1} x^{s+1} + \cdots + {m \choose m} x^m$, and for each constraint $g(s) \dots g(s)$ of length $m$ a factor ${m \choose s} x^s$. Thus, for instance
$$\begin{array}{lll}
\pol(\ol{r}_3, x) & =& (3x+ 3x^2+x^3) (2x)(2x+x^2)(6x^2+4x^3+x^4)\\
\\
& =& 72x^5+156 x^6+144x^7+70x^8+18x^9 +2x^{10} \end{array}$$
and so e.g.\ $\Card(\ol{r}_3, 7) = 144$.

If $T_k$ is defined as the number of $k$-element $(2,1,3)$-transversouls of
${\mathcal G}_2$ then similar to \eqref{eq:7} we have 
\begin{equation}\label{eq:14}
T_k = \ \Card(\ol{r}_1, k) + \cdots + \ \Card (\ol{r}_6, k).
\end{equation}

Apart from the previous lengthy footnote, here comes another advertisement of the transversoul $e$-algorithm. Namely, each constraint $e(s) \dots e(s)$ of length $m$ can be written as an intersection of $\ds{m \choose m-s+1}$ many old type $e$-constraints, for instance
$$(e(2), e(2), e(2), e(2))  =  (e,e,e,2) \cap (e,e,2,e) \cap (e,2,e,e) \cap (2,e,e,e).$$
Hence besides the practical benefits also the theoretic properties of the $e$-algorithm will carry over.

The remaining three sections are dedicated to numerical evaluations and applications of the $e$-algorithm, with a pinch of transversouls in the last section.

\section{The nonhomogenous CCP: Pitting the $e$-algorithm against\\ inclusion-exclusion}\label{sec:comparison}

Boneh and Hofri [1997, p.~43] emphasize the computational difficulty to evaluate (\ref{eq:1}) as $h$ increases, and then go on to use integration for approximation. Recall that for rational $p_i$'s in the (classic) CCP, say
$$p_1 = \frac{1}{10}, \ p_2 = \frac{2}{10}, \ p_3 = \frac{3}{10}, \ p_4 = \frac{4}{10},$$
our approach uses $W = [10]$ and the partition
$${\mathcal G}^\ast   =  \{\{1\}, \{2,3\}, \{4,5,6\}, \{7, 8, 9, 10\} \}.$$
Because the sets in ${\mathcal G}^\ast$ are disjoint, we can do with a \emph{single} $\{0,1,2,e\}$-valued row
$$r  =   (1, \ \ e_2, e_2, \ \ e_3, e_3, e_3, \ \ e_4, e_4, e_4, e_4).$$
Using $\pol(r,x)$ from \S\ref{sec:algorithm} one readily computes the numbers $\tau_k = \ \Card(r,k)  \ (k \in [10])$, and from them $\ell_r({\mathcal G}^\ast)$
according to Theorem \ref{thm:main}. 

\begin{table}[h]
\begin{center}\begin{tabular}{c|c|c|c|}
$h$ & $\ell_r({\mathcal G}^\ast)$ & exclusion  & incl-excl.  \\ \hline
$10$ & $68.9846$ & $0$ & $0.2$\\ \hline
15 & $150.606$ & $0$ & $7.7$ \\ \hline
27 & $474.463$ & $0.3$ & $43193$\\ \hline
50 & $1600.38$ & $4.1$ & -\\ \hline
100 & $6338.75$ & $72$ & - \\ \hline
150 & $14215.1$ & $455$ & - \\ \hline
200 & $25229.5$ & $1829$ & - \\ \hline
400 & $100667$ & $96272$  & - \\ \hline
\end{tabular}\end{center}

\caption{Total time in seconds taken when computing $\ell_r(\mathcal G^\ast)$
by the $e$-algorithm (exclusion) and by the inclusion-exclusion algorithm.}
\label{tab:3}
\end{table}

Table \ref{tab:3} compares the $e$-algorithm with the inclusion-exclusion approach (\ref{eq:1}) on instances $(p_1, \dots, p_h)$ of the particular but natural type 
$$p_1 = \frac{1}{w}, \quad p_2 = \frac{2}{w}, \quad \dots, \quad p_h = \frac{h}{w} \quad \left(\mbox{hence} \ w = 1+ \cdots + h= \ds\frac{h(h+1)}{2} \right)$$
which is uniquely defined by $h$ ($=$ first column in Table~\ref{tab:3}). As to inclusion-exclusion, we used a standard Gray-code in order to more economically generate the subsets of $[h]$ one by one from their predecessors, and also used that for common denominator probabilities one can simplify the terms in (\ref{eq:1}); say
$$\frac{1}{p_i+p_j+p_k} = \frac{1}{\frac{i}{w} + \frac{j}{w} + \frac{k}{w}} = \frac{w}{i+j+k}.$$
The value of $\ell_r({\mathcal G}^\ast)$ is rounded to 6 digits albeit Mathematica, provided with the numbers $\tau_k  \ (k \in [w])$, delivered 
the \emph{exact} value as a fraction of two very large integers. For instance $h = 400$ gives $w = 80200$ and $3108$ sec of the $96272$ sec total
time were spent on plugging $\tau_1, \tau_2, \dots, \tau_{80200}$ into the formula of Theorem \ref{thm:main}. As is apparent, inclusion-exclusion
(formula (\ref{eq:1})) cannot compete. 

For the particular $p_i$'s considered one can show [David and Barton 1962, p.269] that $\ell_r ({\mathcal G}^\ast)$ asymptotically goes to $\left( \frac{4\pi}{\sqrt{3}} - 6 \right) {h+1 \choose 2}$ as $h \ra \infty$. Already for $h=15$ the latter gives the tight approximation $150.624$ to the true (rounded) value $150.606$.

%\begin{tabular}{c|c|c|c|c|c|c} \hline

%$w$ & $h$ & $d$ & $\ell_{nr}({\mathcal G})$ &  $\ell_r({\mathcal G})$ &  time (sec) & final rows \\ \hline

%40  & 10  & 0.3 & 7.69 & 8.57 &  1.6 & 579 \\ \hline

%40  & 10 & 0.7 & 2.78 & 2.86 &  0.3 & 123 \\ \hline

%40 & 40 & 0.3 & 10.61 & 12.28 &  686.3 & 274 117\\ \hline

%40 & 40 & 0.7 & 3.85 & 4.00 &  6.0 & 2092 \\ \hline

%40 & 100 & 0.3 & 12.34 & 14.66 &  18349.4 & 5 429 569\\ \hline

%40 & 100 & 0.7 & 4.46 & 4.68 &  44.5 & 9842 \\ \hline

%100 & 100 & 0.7 & 4.66 & 4.75 &  2435.5  & 361428 \\ \hline

%\end{tabular}

%Table 5

\section{Information spreading and the expected time to dominate a chess board}  \label{sec:domination}

In many GCCP applications the goals of a coupon $c$ are \emph{other coupons}, namely those that $c$ wishes to ``influence'' in some way. More succinctly, we may consider a graph $G$ whose vertex set $W$ we imagine as a group of $h=w$ people whose friendship relations are reflected by the edges of $G$. Suppose members $c \in W$ are phoned at random from outside $W$ and told a piece of information. If $c$ shares the news with all his friends, what is the expected\footnote{We mention in passing that the \emph{minimum} number of phone calls necessary is called the \emph{domination number} of $G$.} number $\ell_r({\mathcal G})$ of phone calls necessary before the whole of $W$ is informed? What is the analogue number $\ell_{nr}({\mathcal G})$ when nobody is phoned twice?

A pleasant instance of the graph framework, where friendship turns to aggression though, is the problem to determine the expected number $\ell_{nr}$(queens) of queens it takes when they are placed on a chessboard at random until the queens dominate the board, i.e., all 64 squares (coupons) are occupied or threatened. If occupied squares can still be drawn (without effect apart from increasing the trial's length), let $\ell_r$(queens) be the corresponding number. We also define $\ell_{nr}$(rooks), $\ell_{nr}$(kings), \ldots in an analogous fashion.

One obtains the following results (rounded to four decimals):

$$\begin{array}{llllll}
\ell_{nr} ({\rm queens}) &= & 11.8402 & \quad \ell_r ({\rm queens}) & =& 15.2945\\
\ell_{nr} ({\rm rooks}) &=  & 15.0045 & \quad \ell_r ({\rm rooks}) &= & 17.1308\\
\ell_{nr} ({\rm kings}) & = & 30.4091 & \quad \ell_r ({\rm kings}) & =& 42.4282 \end{array}$$
If one does not consider a square occupied by a queen as threatened by her (after all, an unthreatened knight can capture her), the numbers $\ell_{nr}$(queens) and $\ell_r$(queens) grow to $\ell^\ast_{nr}$(queens)$= 12.7094$ respectively $\ell^\ast_r$(queens) $= 16.3149$. Also GCCP applications to trading card games such as {\it Magic: The Gathering}, and much more\footnote{Readers are encouraged to email ideas to any of the authors.}, are conceivable.

%\subsection{From transversals to transversouls}  \label{sec:souls}

%\subsection{The transversoul $e$-algorithm in a nutshell}  \label{subsec:nutshell}

%\subsection{Focusing on individual rows}  \label{subsec:focusing}

\section{The likelihood of getting the $i$-th goal at least $\alpha_i$ times in $k$ drawings}  \label{sec:alpha}

For the homogeneous CCP with $w$ coupons the probability $p$ to have exacty $k$ distinct coupons after a length $n$ trial was already known to Laplace,
and is easily seen to be 
$$
p   =   \ds\frac{k!}{w^n} {w \choose k} S(n,k).
$$

For the nonhomogeneous CCP matters get more complicated. Settling for ``at least $k$'' instead of ``exactly $k$'', the problem has e.g.\ been tackled in
[Boneh-Hofri 1997] by using the Cauchy integral formula. 

Here we also stick to $\geq k$ but lift the problem from coupons to goals and focus on trials \emph{without} replacement. No surprise, our approach is completely different.
Consider a set $W = \{c_1, c_2, \dots, c_{12} \}$ of twelve coupons, each one of which having one or two goals as in Table~\ref{tab:6}.

\begin{table}[h]
\begin{center}\begin{tabular}{|l|c|c|c|c|c|c|c|c|c|c|c|c|}
& $c_1$ & $c_2$ & $c_3$ & $c_4$ & $c_5$ & $c_6$ & $c_7$ & $c_8$ & $c_9$ & $c_{10}$  & $c_{11}$ & $c_{12}$ \\ \hline
goal 1 & $x$ & $x$ & $x$ & $x$ & $x$ & & & & & & &\\ \hline
goal 2 & & & & & & $x$ & $x$ & $x$ & & & & \\ \hline
goal 3 & & & & $x$ & $x$ & $x$ & &  & $x$ & $x$ & $x$ & $x$\\ \hline
\end{tabular}\end{center}

\caption{Toy problem for \S\ref{sec:alpha}}
\label{tab:6}
\end{table}

Fix $k \in [12]$. If the coupons are drawn with equal probability $\frac{1}{12}$ and without replacement, what is the likelihood $Q_k$ that
after exactly $k$ drawings the goals $1,2,3$ have shown up at least $2, 1, 3$ times respectively? One readily verifies that
$$Q_1 = Q_2 = 0, \quad Q_3 > 0, \quad Q_9 < 1, \quad Q_{10} = Q_{11} = Q_{12} = 1.$$
As to why $Q_3 > 0$, look at $c_4, c_5, c_6$.
The precise values of the $Q_k$'s are as in \eqref{eq:qk} given by (here $w =12$) 
\begin{equation}
Q_k  =  \ds\frac{k!T_k}{w(w-1) \cdots (w-k+1)} \label{eq:16}
\end{equation}
where $T_k$ is the number of $k$-element $(2,1,3)$-transversouls of the hypergraph based on $\{c_1, \dots, c_{12} \}$ which is induced by Table~\ref{tab:6}. It just so happens that this is the hypergraph  ${\Tr} ({\mathcal G}_2, \alpha)$ from \S\ref{sec:algorithm}. The $i$-th row in Table \ref{tab:8} contains the
numbers $\Card(\ol{r}_i, 1)$ up to $\Card(\ol{r}_i, 12)$, which were calculated in the same way as $\Card(\ol{r}_3, 7) = 144$ in \S\ref{sec:algorithm}. Hence $T_k$ is the sum
of the numbers of the $k$-th column in Table \ref{tab:8}. The $T_k$'s yield the desired probabilities $Q_k$ according to \eqref{eq:16}. 

Finally, consider the generalization of GCCP where the drawing probabilities of coupons {\it depend} on the previously drawn coupons. That the $e$-algorithm adapts to some extent, will (hopefully) be shown in a future publication.

\begin{table}
\begin{center}\begin{tabular}{|l|c|c|c|c|c|c|c|c|c|c|c|c|}
& 1 & 2& 3& 4& 5 & 6 & 7 & 8 & 9 & 10 & 11 & 12 \\ \hline
$\ol{r}_1$ & 0 & 0 & 0 & 0 & 0 & 24  & 26 & 9 & 1 & 0 &0 &0 \\ \hline
$\ol{r}_2$ & 0 & 0 & 0 & 0 & 18 & 54 & 61 & 33 & 9 &1 &0 & 0 \\ \hline
$\ol{r}_3$ & 0 & 0 & 0 & 0 & 72 & 156 &144 & 70 & 18 & 2 & 0 & 0 \\ \hline
$\ol{r}_4$ & 0 & 0 & 0 & 24 & 108 & 212 & 238 & 166 & 72 & 18 & 2 & 0 \\ \hline
$\ol{r}_5$ & 0 & 0 & 0 & 8  & 40 & 86 & 104 & 77 & 35 & 9 & 1 &  0 \\ \hline
$\ol{r}_6$ & 0 & 0 & 1 & 9 & 36 & 84 & 126 & 126 & 84 & 36 & 9 & 1 \\ \hline
$T_k$ & 0 & 0 & 1 & 41 & 274 & 616 & 699 & 481 & 219 & 66 & 12 & 1 \\ \hline
$Q_k$ & 0 &0 & 0.005 & 0.083 & 0.346 & 0.667 & 0.883 &0.972 &  0.995 & 1 & 1 & 1 \\ \hline
\end{tabular}\end{center}

\caption{Solution of toy problem of \S\ref{sec:alpha}}
\label{tab:8}
\end{table}

%\nocite{*}

%\bibliographystyle{abbrvnat}

% use the following instead if you encounter problems

%\bibliographystyle{alpha}

%\bibliography{coupons}

%\label{sec:biblio}

\section*{References}

\begin{description}

\item [Boneh--Hofri 1997] A. Boneh and M. Hofri, The Coupon-Collector
	  Problem revisited --  a survey of engineering problems and computational
	  methods. {\it Stochastic Models} 13 (1997), 39--66.
\item [David-Barton 1962] F.N. David and D.E. Barton, Combinatorial Chance, Charles Griffin \& Company Limited 1962.

\item[Feller 1957] W. Feller, {\it An introduction to probability theory
	  and its applications, Vol. I.} Wiley, New York, 2nd edition, 1957.

%    \item[FGT 1992] P. Flajolet, D. Gardy, L. Thimonier, Birthday paradox,

%    coupon collectors, caching algorithms and self-organizing search,

%    Disc. Appl. Math. 39 (1992) 207-229. 

    \item[Wild 2012a] M. Wild, Compactly generating all satisfying truth
	  assignments of a Horn Formula. 
\emph{Journal on Satisfiability, Boolean Modeling and Computation} 8 (2012), 63--82.

    \item[Wild 2012b] M. Wild, Computing the output distribution and
	  selection probabilities of a stack filter from the DNF of its positive
	  Boolean function. 
\emph{Journal of Mathematical Imaging and Vision}, Online First, 1 August 2012.

%    \item[Knuth 1998]    D. E. Knuth, 

% \emph{The Art of Computer Programming. 

% Vol. 3: Seminumerical Algorithms}.

% 3nd ed., Addison-Wesley,

% Reading, Mass., 1998.

% \item[Dawkins 1991] Brian Dawkins, 

% "Siobhan's problem: the coupon collector revisited", 

% The American Statistician 45 (1): 76–82, 1991.

% \item[BHS 1994] G. Blom, L. Holst and D. Sandell,

% Problems and snapshots from the world of probability.  Springer-Verlag, New

% York, 1994. xii+240 pp. ISBN: 0-387-94161-4 

% \item[Polya 1930] G. P\'olya,

% Eine Wahrscheinlichkeitsaufgabe in der Kundenwerbung 

% Zeitschrift f\"ur Angewandte Mathematik und Mechanik

% 10 (1930), 96--97

% \item[Motwani-Raghavan 1995] R. Motwani and P. Raghavan, 

% Randomized algorithms.  Cambridge University Press, Cambridge, 1995. 

% xiv+476 pp. ISBN: 0-521-47465-5

\end{description}

\end{document}